\numberwithin{equation}{section}
\numberwithin{figure}{section}
\theoremstyle{plain}
\newtheorem{thm}{Theorem}
  \theoremstyle{definition}
  \newtheorem{defn}[thm]{Definition}
  \theoremstyle{plain}
  \newtheorem{lem}[thm]{Lemma}
 \theoremstyle{definition}
  \newtheorem{example}[thm]{Example}
  \theoremstyle{plain}
  \newtheorem{cor}[thm]{Corollary}
\newcommand{\dist}{\mathop{\mathrm{dist}}\nolimits}
\begin{document}

\title{Metrics Based on Average Distance Between Sets}

\author{Osamu Fujita}

\address{Department of Arts and Sciences, Osaka Kyoiku University, 4-698-1
Asahigaoka, Kashiwara, Osaka 582-8582, Japan}

\email{fuji@cc.osaka-kyoiku.ac.jp}
\begin{abstract}
This paper presents a distance function between sets based on an average
of distances between their elements. The distance function is a metric
if the sets are non-empty finite subsets of a metric space. It can
be applied to produce various metric spaces on collections of sets
and will be useful for analyzing complex data sets in the fields of
computer science and information science. Its generalizations to include
the Hausdorff metric and extensions to infinite sets for treating
fuzzy sets are also discussed. 
\end{abstract}

\keywords{Metric, distance between sets, average distance, power mean, Hausdorff
metric.}

\maketitle

\section{Introduction}

A metric defined in general topology \cite{Hart,Nagata}, based on
a natural notion of distance between points, is generally extensible
to distance between sets or more complex elements. The Hausdorff metric
is such a typical one and practically used for image data analysis
\cite{Rucklidge}, but it has some problems. In the Euclidean metric
on $\mathbb{R}$, for example, the Hausdorff distance between bounded
subsets of $\mathbb{R}$ often depends only on their suprema or infima,
no matter how the other elements of the sets are distributed within
a certain range, which means it places importance on extremes and
disregards the middle parts of the sets. This is a drawback because
it is sensitive to noises, errors and outliers in analyzing real world
data. There is a need to develop another metric that reflects the
overall characteristics of elements of the sets. 

In computer science, especially in the fields of pattern recognition,
classification, information retrieval and artificial intelligence,
it is important for data analysis to measure similarity or difference
between data objects such as documents, images and signals. If the
data objects can be represented by vectors, a conventional distance
between vectors is a proper measure in their vector space. In practice,
however, there are various data objects that should be dealt with
in the form of collections of sets, probability distributions, graph
structured data, or collections consisting of more complex data elements.
To analyze these data objects, numerous distance-like functions have
been developed \cite{Deza}, like the Mahalanobis distance and the
Kullback-Leibler divergence, even though they do not necessarily satisfy
symmetry and/or the triangle inequality. 

As a true metric, besides the Hausdorff metric, there is another type
of distance functions of sets, such as the Jaccard distance, based
on the cardinality of the symmetric difference between sets or its
variations. However, it measures only the size of the set difference,
and takes no account of qualitative differences between individual
elements. Thus, both metrics are insufficient to analyze informative
data sets in which each element has its own specific meaning. 

This paper presents a new distance function between sets based on
an average distance. It takes all elements into account. It is a metric
if the sets are non-empty finite subsets of a metric space, and includes
the Jaccard distance as a special case. By using the power means \cite{Bullen},
we obtain generalized forms that also include the Hausdorff metric.
Extensions of the metric to hierarchical collections of infinite subsets
will be useful for treating fuzzy sets and probability distributions.

\section{Preliminaries}

The metric is extended to various types of generalized metrics. To
avoid confusion in terminology, the following definition is used.
\begin{defn}
\label{def:metric}(Metric) Suppose $X$ is a set and $d$ is a function
on $X\times X$ into $\mathbb{R}$. Then $d$ is called a \emph{metric}
on $X$ if it satisfies the following conditions, for all $a,b,c\in X$,

\begin{minipage}[t]{1\columnwidth}%
\begin{description}
\item [{M1}] $d(a,b)\geq0$ (non-negativity),
\item [{M2}] $d(a,a)=0$,
\item [{M3}] $d(a,b)=0\Rightarrow a=b$,
\item [{M4}] $d(a,b)=d(b,a)$ (symmetry),
\item [{M5}] $d(a,b)+d(b,c)\geq d(a,c)$ (triangle inequality).
\end{description}
\end{minipage}
\end{defn}
The set $X$ is called a \emph{metric space} and denoted by $(X,d)$.
The function $d$ is called \emph{distance function} or simply \emph{distance}. 

The metric is generalized by relaxing the conditions as follows:
\begin{itemize}
\item If $d$ satisfies M1, M2, M4 and M5, then it is called a \emph{pseudo-metric}.
\item If $d$ satisfies M1, M2, M3 and M5, then it is called a \emph{quasi-metric}. 
\item If $d$ satisfies M1, M2, M3 and M4, then it is called a \emph{semi-metric}. 
\end{itemize}
This terminology follows \cite{Hart,Nagata}, though the term \emph{{}``semi-metric''}
is sometimes referred to as a synonym of \emph{pseudo-metric} \cite{Deza}. 

A set-to-set distance is usually defined as follows (see, e.g., \cite{Searcoid}):
Let $A$ and $B$ be two non-empty subsets of $X$. For each $x\in X$,
the distance from $x$ to $A$, denoted by $\dist(x,A)$, is defined
by the equation\begin{equation}
\dist(x,A)=\inf\{d(x,a)\mid a\in A\}.\label{eq:dist-p2s}\end{equation}
This is fundamental not only to the definitions of a boundary point
and an open set in metric spaces but also to the generalization of
a metric space to \emph{approach space} \cite{Lowen}. Similarly,
the distance from $A$ to $B$ can be straightforwardly defined by
\begin{equation}
\dist(A,B)=\inf\{d(a,b)\mid a\in A,b\in B\}.\label{eq:dist-s2s}\end{equation}
The function $\dist()$ is neither a pseudo-metric nor a semi-metric.
However, let $\mathcal{S}(X)$ be the collection of all non-empty
closed bounded subsets of $X$. Then, for $A,B\in\mathcal{S}(X)$,
the function $h(A,B)$ defined by\begin{equation}
h(A,B)=\max\{\sup\{\dist(b,A)\mid b\in B\},\sup\{\dist(a,B)\mid a\in A\}\}\label{eq:dist-h}\end{equation}
is a metric on $\mathcal{S}(X)$, and \emph{h} is called the \emph{Hausdorff
metric}. The collection $\mathcal{S}(X)$ topologized by the metric
$h$ is called a \emph{hyperspace} in general topology.

In computer science, data sets are generally discrete and finite.
A popular metric is the \emph{Jaccard distance} (or \emph{Tanimoto
distance,} \emph{Marczewski\textendash{}Steinhaus distance} \cite{Deza})
that is defined by\begin{equation}
j(A,B)=\frac{\left|A\triangle B\right|}{\left|A\cup B\right|},\label{eq:dist-j}\end{equation}
where $\left|A\right|$ is the cardinality of $A$, and $\triangle$
denotes the symmetric difference: $A\triangle B=(A\setminus B)\cup(B\setminus A)$.
In addition, $\left|A\triangle B\right|$ is also used as a metric.

In cluster analysis \cite{Everitt}, the distance \eqref{eq:dist-s2s}
is used as the\emph{ minimum distance} between data clusters for single-linkage
clustering, and likewise the\emph{ maximum distance} is defined by
replacing infimum with maximum for complete-linkage clustering. Moreover,
the \emph{group-average distance} (or \emph{average distance,} \emph{mean
distance}) defined as $g(A,B)$ in the following is also typically
used for hierarchical clustering. Although these three distance functions
are not metrics, the\emph{ }group-average distance\emph{ }plays an
important role in this paper.
\begin{lem}
\label{lem:g-tri} Suppose $(X,d)$ is a non-empty metric space. Let
$\mathcal{S}(X)$ denote the collection of all non-empty finite subsets
of $X$. For each $A$ and $B$ in $\mathcal{S}(X)$, define $g(A,B)$
on $\mathcal{S}(X)\times\mathcal{S}(X)$ to be the function \begin{equation}
g(A,B)=\frac{1}{\left|A\right|\left|B\right|}\sum_{a\in A}\sum_{b\in B}d(a,b).\label{eq:dist-g}\end{equation}
Then g satisfies the triangle inequality.\end{lem}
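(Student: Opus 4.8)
The plan is to reduce the inequality $g(A,B)+g(B,C)\ge g(A,C)$ to the pointwise triangle inequality M5 in $(X,d)$ by averaging. First I would fix an arbitrary pair $a\in A$, $c\in C$ together with an arbitrary $b\in B$; then M5 gives $d(a,b)+d(b,c)\ge d(a,c)$. Averaging this over all $b\in B$ (which is legitimate since $B$ is non-empty and finite, so $|B|$ is a positive integer) yields
\[
\frac{1}{|B|}\sum_{b\in B}d(a,b)+\frac{1}{|B|}\sum_{b\in B}d(b,c)\ \ge\ d(a,c),
\]
and this holds for every choice of $a\in A$ and $c\in C$.

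Next I would average this last inequality over all $a\in A$ and all $c\in C$, i.e.\ apply the operator $\frac{1}{|A||C|}\sum_{a\in A}\sum_{c\in C}$ to both sides. The right-hand side becomes exactly $g(A,C)$ by \eqref{eq:dist-g}. For the left-hand side the key bookkeeping observation is that in the first term the summand $d(a,b)$ does not depend on $c$, so the sum over $c\in C$ contributes only a factor $|C|$, which cancels the $1/|C|$; what remains is $\frac{1}{|A||B|}\sum_{a\in A}\sum_{b\in B}d(a,b)=g(A,B)$. Symmetrically, in the second term $d(b,c)$ does not depend on $a$, the sum over $a\in A$ contributes a factor $|A|$, and what remains is $\frac{1}{|B||C|}\sum_{b\in B}\sum_{c\in C}d(b,c)=g(B,C)$. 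Putting the three pieces together gives $g(A,B)+g(B,C)\ge g(A,C)$, which is the claim.

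There is essentially no serious obstacle here: non-emptiness and finiteness of $A$, $B$, $C$ guarantee that all denominators are positive and all sums finite, so every averaging step is valid, and the only thing requiring care is the elementary identity that summing a term independent of an index just multiplies by the cardinality of the index set. I would also remark in passing that the same kind of calculation yields M1 and M4 for $g$, but M2 fails whenever $|A|\ge 2$ since then $g(A,A)=\frac{1}{|A|^{2}}\sum_{a\in A}\sum_{a'\in A}d(a,a')>0$; this is precisely why $g$ on its own is not a metric and motivates the modified construction developed in the rest of the paper.
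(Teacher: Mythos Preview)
Your proof is correct and follows essentially the same approach as the paper: both arguments reduce the inequality to the pointwise triangle inequality $d(a,b)+d(b,c)\ge d(a,c)$ and then average over all triples $(a,b,c)\in A\times B\times C$. The paper does this in a single step by writing $g(A,B)+g(B,C)-g(A,C)=\frac{1}{|A||B||C|}\sum_{a,b,c}\bigl(d(a,b)+d(b,c)-d(a,c)\bigr)\ge 0$, while you average first over $b$ and then over $(a,c)$; the difference is purely organizational.
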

\begin{proof}
The triangle inequality for $d$ yields $d(a,b)+d(b,c)-d(a,c)\geq0$
for all $a,b,c\in X$. Then, for all $A,B,C\in\mathcal{S}(X)$, we
have \begin{align}
\lefteqn{g(A,B)+g(B,C)-g(A,C)}\label{eq:g-tri}\\
 & =\frac{1}{\left|A\right|\left|B\right|\left|C\right|}\sum_{a\in A}\sum_{b\in B}\sum_{c\in C}(d(a,b)+d(b,c)-d(a,c))\geq0.\nonumber \end{align}

\end{proof}
For ease of notation, let $s\left(A,B\right)$ be the sum of all pairwise
distances between $A$ and $B$ such that \begin{equation}
s(A,B)=\sum_{a\in A}\sum_{b\in B}d(a,b),\label{eq:sum-of-d}\end{equation}

\noindent so that $g(A,B)=(\left|A\right|\left|B\right|)^{-1}s(A,B)$.
Since $d$ is a metric, we have $s(A,B)\geq0$, $s(A,B)=s(B,A)$,
and $s(\{x\},\{x\})=0$ for all $x\in X$. If $A=\emptyset$ or $B=\emptyset$,
then $s(A,B)=0$ due to the empty sum. If $A$ and $B$ are countable
unions of disjoint sets, it can be decomposed as follows: \begin{equation}
s\Bigl(\bigcup_{i}^{n}A_{i},\bigcup_{j}^{m}B_{j}\Bigr)=\underset{i}{\sum^{n}}\underset{j}{\sum^{m}}s(A_{i},B_{j}),\label{eq:s-decomp}\end{equation}
where $A_{i}\cap A_{j}=\emptyset=B_{i}\cap B_{j}$ for $i\neq j$.
Furthermore, we define $t(A,B,C)$ by the following equation \begin{equation}
t(A,B,C)=\left|C\right|s(A,B)+\left|A\right|s(B,C)-\left|B\right|s(A,C).\label{eq:t-tri}\end{equation}
It follows from Lemma \ref{lem:g-tri} that $t(A,B,C)\geq0$ for $A,B,C\in\mathcal{S}(X)$,
which is a shorthand notation of the triangle inequality \eqref{eq:g-tri}.

\section{Metric based on average distance}
\begin{thm}
\label{thm:f-is-m}Suppose $(X,d)$ is a non-empty metric space. Let
$\mathcal{S}(X)$ denote the collection of all non-empty finite subsets
of $X$. For each $A$ and $B$ in $\mathcal{S}(X)$, define $f(A,B)$
on $\mathcal{S}(X)\times\mathcal{S}(X)$ to be the function\begin{equation}
f(A,B)=\frac{1}{\left|A\cup B\right|\left|A\right|}\sum_{a\in A}\sum_{b\in B\setminus A}d(a,b)+\frac{1}{\left|A\cup B\right|\left|B\right|}\sum_{a\in A\setminus B}\sum_{b\in B}d(a,b).\label{eq:dist-f}\end{equation}
Then $f$ is a metric on $\mathcal{S}(X)$.\end{thm}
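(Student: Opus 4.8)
The plan is to verify the five axioms M1--M5 of Definition~\ref{def:metric}, the first four being routine and all the difficulty lying in the triangle inequality M5. For M1, $f(A,B)$ is a sum of two nonnegative sums of values of $d$. For M2, when $B=A$ both index sets $B\setminus A$ and $A\setminus B$ are empty, so $f(A,A)=0$. For M4, interchanging the names $A$ and $B$ in \eqref{eq:dist-f} and using $d(a,b)=d(b,a)$ (equivalently $s(A,B)=s(B,A)$) turns the first term into the second and conversely, so $f(B,A)=f(A,B)$. For M3, suppose $f(A,B)=0$; then each of the two nonnegative summands is $0$. If $A\neq B$, we may assume $B\setminus A\neq\emptyset$; choosing $b\in B\setminus A$ and, since $A\neq\emptyset$, some $a\in A$, we have $a\neq b$ and hence $d(a,b)>0$, making the first double sum strictly positive --- a contradiction. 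Thus $B\setminus A=\emptyset$ and, symmetrically, $A\setminus B=\emptyset$, so $A=B$.

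The substantial part is M5: $f(A,B)+f(B,C)\ge f(A,C)$ for all $A,B,C\in\mathcal{S}(X)$. A direct approach is to expand all three $f$-values over the at most seven atoms of the Boolean algebra generated by $A$, $B$, $C$: the triple intersection $A\cap B\cap C$, the three ``two-set'' pieces $A\cap B\setminus C$, $A\cap C\setminus B$, $B\cap C\setminus A$, and the three ``private'' pieces $A\setminus(B\cup C)$, $B\setminus(A\cup C)$, $C\setminus(A\cup B)$. Applying the decomposition \eqref{eq:s-decomp}, each of the six double sums appearing in $f(A,B)$, $f(B,C)$, $f(A,C)$ splits into sums of terms $s(P_i,P_j)$ over ordered pairs of atoms, so that $f(A,B)+f(B,C)-f(A,C)$ is a linear form in the quantities $s(P_i,P_j)$ whose coefficients are explicit rational functions of the seven atom cardinalities (through the factors $|A|^{-1}$, $|B|^{-1}$, $|C|^{-1}$ and $|A\cup B|^{-1}$, $|B\cup C|^{-1}$, $|A\cup C|^{-1}$).

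The main obstacle --- and the real content of the theorem --- is to show that this linear form is nonnegative for every choice of metric $d$. The tools available are the nonnegativity $s(P_i,P_j)\ge0$ and the triangle inequalities $t(P_i,P_j,P_k)\ge0$ coming from Lemma~\ref{lem:g-tri} (one checks that both $s$ and $t$ are additive over disjoint unions in each argument, so these atomic inequalities are the natural currency). Concretely, after clearing the three distinct denominators $|A\cup B|$, $|B\cup C|$, $|A\cup C|$ --- which is where essentially all of the bookkeeping comes from, since they do not agree --- one must exhibit nonnegative weights $\lambda_{ijk}$, $\mu_{ij}$ realizing
\[
f(A,B)+f(B,C)-f(A,C)=\sum_{i,j,k}\lambda_{ijk}\,t(P_i,P_j,P_k)+\sum_{i,j}\mu_{ij}\,s(P_i,P_j).
\]
Two observations should keep the case analysis manageable: the configuration is symmetric under interchanging $A$ and $C$ (which swaps the atom $A\cap B\setminus C$ with $B\cap C\setminus A$, and $A\setminus(B\cup C)$ with $C\setminus(A\cup B)$, and exchanges the two middle terms), so the weights need only be found up to this symmetry; and when $A$, $B$, $C$ are pairwise disjoint all the set-difference restrictions in \eqref{eq:dist-f} disappear, every denominator becomes a sum of two cardinalities, and $f$ reduces to $g$, so that case is precisely Lemma~\ref{lem:g-tri} --- a useful consistency check, though not a reduction of the general case.
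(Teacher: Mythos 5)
Your verification of M1--M4 is correct and matches the paper's argument, and your plan for M5 --- decompose $A\cup B\cup C$ into the atoms of the Boolean algebra, expand every double sum via \eqref{eq:s-decomp} after clearing the denominators $\left|A\cup B\right|$, $\left|B\cup C\right|$, $\left|A\cup C\right|$, and try to write the result as a nonnegative combination of terms $s(\cdot,\cdot)\geq0$ and $t(\cdot,\cdot,\cdot)\geq0$ --- is exactly the strategy the paper follows. But you stop precisely at the point where the theorem's content lies: you say ``one must exhibit nonnegative weights $\lambda_{ijk}$, $\mu_{ij}$'' and never exhibit them, nor give any argument that such weights exist. There is no a priori reason the linear form you obtain (which has genuinely negative coefficients coming from the $-f(A,C)$ terms) lies in the cone generated by the inequalities $s\geq0$ and $t\geq0$; establishing that it does is the whole difficulty, and the paper spends all of Appendix A producing the explicit identity
\begin{align*}
\lefteqn{\left|A\right|\left|B\right|\left|C\right|\left|A\cup B\right|\left|B\cup C\right|\left|A\cup C\right|\bigl(f(A,B)+f(B,C)-f(A,C)\bigr)}\\
 & =\left|B\right|\left|C\right|\bigl(\left|\theta\cup C\right|t(A,B\setminus A,\gamma)+\left|\alpha\right|t(A,\beta,\gamma)+\left|B\cup\zeta\right|t(A,\beta,C\setminus A)\bigr)+\cdots
\end{align*}
with all remaining terms of the same nonnegative shape. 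Without this (or an equivalent verification), the proof is an outline, not a proof.

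The delicacy is not hypothetical: the paper's own Corollary shows that the closely analogous average-based function $e(A,B)$ satisfies M1--M4 but fails the triangle inequality, with the failure expressed as $-t(\delta,\eta,\varepsilon)\leq0$. So an argument that merely sets up the atomic expansion and appeals to symmetry under interchanging $A$ and $C$ cannot distinguish $f$ from $e$; the nonnegativity of the specific coefficients for $f$ has to be checked by actually carrying out the algebra (or by some other mechanism you have not supplied). Your disjoint-case consistency check is fine but, as you note yourself, does not reduce the general case. To complete the proof you need to either reproduce the explicit decomposition of Appendix A or give an independent argument that every coefficient in the resulting $s$-expansion can be absorbed into nonnegative multiples of $t$-inequalities.
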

\begin{proof}
The function $f$ can be rewritten, using $s$ in \eqref{eq:sum-of-d},
as \[
f(A,B)=\frac{s(A,B\setminus A)}{\left|A\cup B\right|\left|A\right|}+\frac{s(A\setminus B,B)}{\left|A\cup B\right|\left|B\right|}.\]
It is non-negative and symmetric. If $A=B$, then $s(A,B\setminus A)=s(A,\emptyset)=0$
and $s(A\setminus B,B)=s(\emptyset,B)=0$, so that $f(A,B)=0$. Conversely,
if $f(A,B)=0$, then $s(A,B\setminus A)=0=s(A\setminus B,B)$ for
$A,B\in\mathcal{S}(X)$. This holds if, and only if, $B\setminus A=\emptyset=A\setminus B$,
which implies $B\subseteq A$ and $A\subseteq B$. Then we have $f(A,B)=0\Leftrightarrow A=B$. 

The triangle inequality is straightforwardly proved to be $f(A,B)+f(B,C)-f(A,C)\geq0$
by showing that the left-hand terms are transformed into the sum of
non-negative terms of $s$ and $t$ in \eqref{eq:t-tri}. Let $A\cup B\cup C$
be decomposed into five disjoint partitions: $\alpha=A\setminus(B\cup C)$,
$\beta=B\setminus(A\cup C)$, $\gamma=C\setminus(A\cup B),$ $\zeta=A\cap C\setminus B$,
and $\theta=B\setminus\beta=B\cap(A\cup C)$. Then we have \begin{align*}
\lefteqn{\left|A\right|\left|B\right|\left|C\right|\left|A\cup B\right|\left|B\cup C\right|\left|A\cup C\right|\bigl(f(A,B)+f(B,C)-f(A,C)\bigr)}\\
 & =\left|B\right|\left|C\right|\bigl(\left|\theta\cup C\right|t(A,B\setminus A,\gamma)+\left|\alpha\right|t(A,\beta,\gamma)+\left|B\cup\zeta\right|t(A,\beta,C\setminus A)\bigr)\\
 & \quad+\left|A\right|\left|B\right|\bigl(\left|A\cup\theta\right|t(\alpha,B\setminus C,C)+\left|\gamma\right|t(\alpha,\beta,C)+\left|B\cup\zeta\right|t(A\setminus C,\beta,C)\bigr)\\
 & \quad+\left|A\right|\left|C\right|\bigl(\left|A\setminus B\right|t(\alpha,B,C\setminus B)+\left|B\right|t(\alpha,\theta,\gamma)+\left|C\setminus B\right|t(A\setminus B,B,\gamma)\bigr)\\
 & \quad+\left|A\right|\left|C\right|\left|\theta\right|\bigl(t(\alpha,B,\gamma)+t(\alpha,B,\zeta)+t(\zeta,B,\gamma)\bigr)\\
 & \quad+2\left|A\right|\left|C\right|(\left|\theta\cup C\right|\left|A\cup\theta\right|+\left|\beta\right|\left|A\cup C\right|)s(B,\zeta)\\
 & \quad+2\left|A\right|\left|B\right|\left|C\right|\left|B\cup\zeta\right|s(\beta,A\cap C)\geq0.\end{align*}
The details are given in Appendix A.
\end{proof}
The function $f$ in \eqref{eq:dist-f} can be rewritten, using $g$
in \eqref{eq:dist-g}, as \begin{equation}
f(A,B)=\frac{\left|B\setminus A\right|}{\left|A\cup B\right|}\, g(A,B\setminus A)+\frac{\left|A\setminus B\right|}{\left|A\cup B\right|}\, g(A\setminus B,B).\label{eq:f-by-g}\end{equation}
In $\bigl(\mathcal{S}(X),f\bigr)$, for all $a,b\in X,$ we have $f(\{a\},\{b\})=d(a,b)$
so that $\{\{x\}\mid x\in X\}$ is an isometric copy of $X$. If $A\cap B=\emptyset$,
then $f(A,B)=g(A,B)$. If $d$ is a pseudo-metric, then so is $f$.
\begin{example}
\label{exa:f-eq-J} If $d$ is the discrete metric, where $d(x,y)=0$
if $x=y$ and $d(x,y)=1$ otherwise, then $f(A,B)$ is equal to the
Jaccard distance \eqref{eq:dist-j}.\end{example}
\begin{cor}
Suppose $(X,d)$ is a non-empty metric space. Let $\mathcal{S}(X)$
denote the collection of all non-empty finite subsets of $X$. For
each $A$ and $B$ in $\mathcal{S}(X)$, define $e(A,B)$ on $\mathcal{S}(X)\times\mathcal{S}(X)$
to be the function \[
e(A,B)=\frac{1}{\left|A\right|\left|B\right|}\left(\sum_{a\in A}\sum_{b\in B}d(a,b)-\sum_{a\in A\cap B}\sum_{b\in A\cap B}d(a,b)\right).\]
Then $e$ is a semi-metric on $\mathcal{S}(X)$. \end{cor}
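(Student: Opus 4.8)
The plan is to express $e$ through the sum function $s$ of \eqref{eq:sum-of-d} and reduce the four conditions M1--M4 to bookkeeping with the additivity rule \eqref{eq:s-decomp}; the triangle inequality M5 is neither asserted by the corollary nor needed. First I would rewrite the definition as
\[
e(A,B)=\frac{1}{\left|A\right|\left|B\right|}\bigl(s(A,B)-s(A\cap B,A\cap B)\bigr),
\]
and set $P=A\setminus B$, $Q=B\setminus A$, $D=A\cap B$, so that $P$, $Q$, $D$ are pairwise disjoint with $A=P\cup D$ and $B=Q\cup D$. Applying \eqref{eq:s-decomp} to these two-piece decompositions gives $s(A,B)=s(P,Q)+s(P,D)+s(D,Q)+s(D,D)$, hence the identity
\[
\left|A\right|\left|B\right|\,e(A,B)=s(P,Q)+s(P,D)+s(D,Q),
\]
a sum of three nonnegative quantities.

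From this identity, M1 is immediate, since each $s$-term is a sum of values of $d$, hence nonnegative. For M4 I would note that interchanging $A$ and $B$ swaps $P$ and $Q$, fixes $D$, leaves $\left|A\right|\left|B\right|$ unchanged, and permutes the three terms (using $s(U,V)=s(V,U)$), so $e(A,B)=e(B,A)$. For M2, when $A=B$ we have $P=Q=\emptyset$, so all three terms are empty sums and $e(A,A)=0$.

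The only condition needing a real argument is M3. Assuming $e(A,B)=0$, the identity forces $s(P,Q)=s(P,D)=s(D,Q)=0$; then \eqref{eq:s-decomp} gives $s(P,B)=s(P,Q)+s(P,D)=0$ and, symmetrically, $s(Q,A)=0$. If $P=A\setminus B$ were nonempty, then since $B\neq\emptyset$ the vanishing of $s(P,B)=\sum_{p\in P}\sum_{b\in B}d(p,b)$ would force $d(p,b)=0$ for all $p\in P$ and $b\in B$; by M3 for $d$, picking any $p_{0}\in P$ this gives $B=\{p_{0}\}$ with $p_{0}\in A\setminus B$, contradicting $p_{0}\in B$. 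Hence $A\setminus B=\emptyset$, and the same argument with $s(Q,A)$ gives $B\setminus A=\emptyset$, so $A=B$.

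I do not expect any genuine obstacle: once the identity of the first paragraph is in hand, M1, M2 and M4 are one-line checks, and M3 is the short separation argument above, whose only delicate point is combining the pairwise disjointness of $P$, $Q$, $D$ with the fact that $d$ separates points. The one routine thing to verify is that \eqref{eq:s-decomp} is applied only to genuine disjoint decompositions, which holds by construction (some of $P$, $Q$, $D$ may be empty, but that is harmless since $s$ vanishes whenever one argument is empty).
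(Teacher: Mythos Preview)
Your proposal is correct and follows essentially the same route as the paper: the paper rewrites $e(A,B)$ as $\left(\left|A\right|\left|B\right|\right)^{-1}\bigl(s(A\setminus B,B\setminus A)+s(A\cap B,B\setminus A)+s(A\setminus B,A\cap B)\bigr)$, which is exactly your identity $\left|A\right|\left|B\right|\,e(A,B)=s(P,Q)+s(P,D)+s(D,Q)$, and then remarks that M1--M4 follow ``in a similar manner'' to Theorem~\ref{thm:f-is-m}. You simply spell out those verifications in more detail than the paper does, including the separation argument for M3, which mirrors the one used for $f$ in Theorem~\ref{thm:f-is-m}.
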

\begin{proof}
Let $e(A,B)$ be rewritten as \[
e(A,B)=\frac{1}{\left|A\right|\left|B\right|}\bigl(s(A\setminus B,B\setminus A)+s(A\cap B,B\setminus A)+s(A\setminus B,A\cap B)\bigr).\]
In a similar manner to the proof of Theorem \ref{thm:f-is-m}, it
can be proved that the conditions from M1 to M4, except for M5 (triangle
inequality), are satisfied. 
\end{proof}
It is noted that the triangle inequality $e(A,B)+e(B,C)\geq e(A,C)$
holds if $\left|\delta\right|\left|\varepsilon\right|\left|\eta\right|=0$,
where $\delta=A\cap B\setminus C$, $\varepsilon=B\cap C\setminus A$
and $\eta=A\cap B\cap C$. Otherwise, for example, if $A=\delta\cup\eta$,
$B=\delta\cup\eta\cup\varepsilon$ and $C=\eta\cup\varepsilon$ for
non-empty $\delta$, $\varepsilon$ and $\eta$, then we have \begin{align*}
\lefteqn{\left|A\right|\left|B\right|\left|C\right|\bigl(e(A,B)+e(B,C)-e(A,C)\bigr)}\\
 & =\left|\delta\right|s(\delta,\varepsilon)-\left|\varepsilon\right|s(\delta,\eta)-\left|\delta\right|s(\eta,\varepsilon)=-t(\delta,\eta,\varepsilon)\leq0,\end{align*}
so that the condition M5 is not generally satisfied.

\section{Extensions}

This section discusses future directions for generalization of the
average distance based on the power mean and extensions to metrics
on collections of infinite sets.

\subsection{Generalization based on the power mean}

The distance function \eqref{eq:dist-f} can be unified with the Hausdorff
metric for finite sets by using the\emph{ }power mean. To simplify
expressions, we use the following notation. Let $M_{p}^{(i)}(x\in A,\psi,w)$
be an extended weighted-power-mean of $\psi(x)$ such that \begin{equation}
M_{p}^{(1)}(x\in A,\psi,w)=\left(\frac{1}{\sum_{x\in A}w(x)}\sum_{x\in A}w(x)\bigl(\psi(x)\bigr)^{p}\right)^{1/p},\label{eq:power-m-1}\end{equation}
and its variation using the exponential transform of $\psi$, \begin{equation}
M_{p}^{(0)}(x\in A,\psi,w)=\frac{1}{p}\ln\left(\frac{1}{\sum_{x\in A}w(x)}\sum_{x\in A}w(x)\exp\bigl(p\psi(x)\bigr)\right),\label{eq:power-m-0}\end{equation}
where $i\in\{0,1\}$ indicates one of the two types \eqref{eq:power-m-1}
and \eqref{eq:power-m-0}, $p$ is an extended real number, $\psi$
is a non-negative function of $x\in A$, and $w$ is a weight such
that $w(x)\in(0,1]$ for each $x$ and $\sum_{x\in A}w(x)>0$. In
addition, let $M_{p}^{(i)}(x\in A,\psi)$ denote the abbreviation
of the equal weight case $M_{p}^{(i)}\bigl(x\in A,\psi,1_{A}(x)\bigr)$,
where $1_{A}(x)$ is the indicator function defined by $1_{A}(x)=1$
for $x\in A$ and $1_{A}(x)=0$ for $x\notin A$. If there exists
$x\in A$ such that $\psi(x)=0$ for $p<0$ in \eqref{eq:power-m-1},
then we define $M_{p}^{(1)}=0$, which is consistent with taking the
limit $\psi(x)\rightarrow0^{+}$, though such a case is undefined
in the conventional power mean to avoid division by zero. 

The power mean includes various types of means \cite{Bullen}, which
are parameterized by $p$. By taking limits also for $p=0,\pm\infty$,
we have the following: \begin{alignat*}{1}
M_{i}^{(i)}\bigl(x\in A,\psi(x),1_{A}(x)\bigr) & =M_{i}^{(i)}\bigl(x\in A,\psi(x)\bigr)=\frac{1}{\left|A\right|}\sum_{x\in A}\psi(x),\\
M_{\infty}^{(i)}\bigl(x\in A,\psi(x),w(x)\bigr) & =M_{\infty}^{(i)}\bigl(x\in A,\psi(x)\bigr)=\max\{\psi(x)\mid x\in A\},\\
M_{-\infty}^{(i)}\bigl(x\in A,\psi(x),w(x)\bigr) & =M_{-\infty}^{(i)}\bigl(x\in A,\psi(x)\bigr)=\min\{\psi(x)\mid x\in A\}.\end{alignat*}
Both $M_{1}^{(1)}$ and $M_{0}^{(0)}$ give the same arithmetic mean,
whereas $M_{0}^{(1)}$ gives the geometric mean, and neither $M_{\infty}^{(i)}$
nor $M_{-\infty}^{(i)}$ depends on $w(x)$.

There are some forms of function composition of $M_{p}^{(i)}$ that
include the distance function \eqref{eq:dist-f} as a special case,
for example, as follows: \begin{alignat}{1}
u_{p,q}^{(i,j)}(A,B) & =M_{p}^{(i)}\left(x\in A\cup B,\left(1_{B\setminus A}(x)\, M_{q}^{(j)}\bigl(y\in A,d(x,y)\bigr)\right.\right.\label{eq:u-ij-pq}\\
 & \hspace{100pt}\left.\left.+1_{A\setminus B}(x)\, M_{q}^{(j)}\bigl(y\in B,d(x,y)\bigr)\right)\right),\nonumber \end{alignat}
\begin{alignat}{1}
\lefteqn{\lefteqn{}v_{r,p,q}^{(k,i,j)}(A,B)}\label{eq:v-kij-rpq}\\
 & =M_{r}^{(k)}\left(S\in\{A,B\},M_{p}^{(i)}\left(x\in A\cup B,1_{S^{\complement}}(x)\, M_{q}^{(j)}\bigl(y\in S,d(x,y)\bigr)\right)\right),\nonumber \end{alignat}
where $i,j,k\in\{0,1\}$ and $S^{\complement}$ is the complement
of $S.$ 

In addition, let $w$ be extended to $w\in[0,1]$ to include zero,
though a weight for at least one summand must still be positive. Furthermore,
it is assumed that $0\cdot\infty=0$, in order to ensure $0\cdot0^{p}=0$
for $p<0$ in $M_{p}^{(1)}$, so that the zero-weight can be used
for excluding terms from averaging even if $\psi(x)=0$ (i.e., distance
zero) in the terms. Then, the function \eqref{eq:u-ij-pq} can be
simply expressed as \begin{equation}
u_{p,q}^{(i,j)}(A,B)=M_{p}^{(i)}\left(x\in A\cup B,M_{q}^{(j)}\bigl(y\in A\cup B,d(x,y),w(x,y)\bigr)\right),\label{eq:u-ij-pq-w}\end{equation}
by using the weight function defined by \begin{alignat*}{1}
w(x,y) & =[x\in A\setminus B][y\in B]+[x\in A\cap B][x=y]+[x\in B\setminus A][y\in A]\\
 & =1_{A\setminus B}(x)\,1_{B}(y)+1_{A\cap B}(x)[x=y]+1_{B\setminus A}(x)\,1_{A}(y),\end{alignat*}
where $[\cdot]$ denotes the Iverson bracket, that is a quantity defined
to be $1$ whenever the statement within the brackets is true, and
$0$ otherwise. 

The distance function $f$ in \eqref{eq:dist-f} and the Hausdorff
metric \eqref{eq:dist-h} are expressed by \begin{alignat*}{1}
f(A,B) & =u_{i,j}^{(i,j)}(A,B)=2\, v_{k,i,j}^{(k,i,j)}(A,B),\\
h(A,B) & =u_{\infty,-\infty}^{(i,j)}(A,B)=v_{\infty,\infty,-\infty}^{(k,i,j)}(A,B),\end{alignat*}
respectively, where $i,j,k\in\{0,1\}$. 

Although it is unclear, at present, what conditions on the parameters
$i$, $j$, $k$, $p$, $q$, and $r$ are necessary for \eqref{eq:u-ij-pq}
and \eqref{eq:v-kij-rpq} to be metrics, these generalized forms are
capable of generating various distance functions in fact as follows:
\begin{example}
\label{exa:v000rpq}The exponential types $u_{p,q}^{(0,0)}(A,B)$
and $v_{r,p,q}^{(0,0,0)}(A,B)$ are written as \begin{alignat*}{1}
u_{p,q}^{(0,0)}(A,B) & =\frac{1}{p}\ln\left(\frac{1}{\left|A\cup B\right|}\sum_{x\in A\cup B}\left(\frac{1_{B\setminus A}(x)}{\left|A\right|}\sum_{y\in A}e^{qd(x,y)}\right.\right.\\
 & \hspace{80pt}\left.\left.+1_{A\cap B}(x)+\frac{1_{A\setminus B}(x)}{\left|B\right|}\sum_{y\in B}e^{qd(x,y)}\right)^{p/q}\:\right),\end{alignat*}
\begin{alignat*}{1}
v_{r,p,q}^{(0,0,0)}(A,B) & =\frac{1}{r}\ln\left(\frac{1}{2}\Biggl(\frac{1}{\left|A\cup B\right|}\sum_{x\in B\setminus A}\biggl(\frac{1}{\left|A\right|}\sum_{y\in A}e^{qd(x,y)}\biggr)^{p/q}+\frac{\left|A\right|}{\left|A\cup B\right|}\Biggr)^{r/p}\right.\\
 & \hspace{20pt}\left.+\frac{1}{2}\Biggl(\frac{1}{\left|A\cup B\right|}\sum_{x\in A\setminus B}\biggl(\frac{1}{\left|B\right|}\sum_{y\in B}e^{qd(x,y)}\biggr)^{p/q}+\frac{\left|B\right|}{\left|A\cup B\right|}\Biggr)^{r/p}\:\right).\end{alignat*}
If $d$ is the discrete metric multiplied by a positive constant $\lambda$,
then we have \begin{align}
u_{p,q}^{(0,0)}(A,B) & =\frac{1}{p}\ln\left(\frac{e^{p\lambda}\left|A\triangle B\right|+\left|A\cap B\right|}{\left|A\cup B\right|}\right),\label{eq:u-00-pq-d}\\
v_{0,p,q}^{(0,0,0)}(A,B) & =\frac{1}{2p}\ln\left(\left(\frac{e^{p\lambda}\left|B\setminus A\right|+\left|A\right|}{\left|A\cup B\right|}\right)\left(\frac{e^{p\lambda}\left|A\setminus B\right|+\left|B\right|}{\left|A\cup B\right|}\right)\right).\label{eq:v-000-0pq-d}\end{align}
The functions \eqref{eq:u-00-pq-d} and \eqref{eq:v-000-0pq-d} are
metrics for $p>0$ and $p<0$, respectively. The proofs of each triangle
inequality are outlined in Appendix B. If $p=0$, then it is the same
situation as Example \ref{exa:f-eq-J}. By taking the limit for $p=0$,
we can see that both functions are equal to the Jaccard distance \eqref{eq:dist-j}
except for the coefficient $\lambda$.
\end{example}

\subsection{Hierarchical metric spaces\label{sub:Hier-m-sp}}

Suppose that $(X,d)$ is a metric space and $\mathcal{S}(X)$ is the
collection of all non-empty finite subsets of $X$. Let $k$ be a
non-negative integer, let $\mathcal{S}_{k+1}(X)$ denote the collection
of all non-empty finite subsets of $\mathcal{S}_{k}(X)$, and let
$f_{k}$ be a metric on $\mathcal{S}_{k}(X)$, where $\mathcal{S}_{1}(X)$,
$\mathcal{S}_{0}(X)$, $f_{1}$ and $f_{0}$ correspond to, respectively,
$\mathcal{S}(X)$, $X$, $f$, and $d$ in Theorem \ref{thm:f-is-m}.
For $k>1$, in much the same way, for each $\mathcal{A}$ and $\mathcal{B}$
in $\mathcal{S}_{k}(X)$, the function $f_{k}(\mathcal{A},\mathcal{B})$
can be defined by \begin{equation}
f_{k}(\mathcal{A},\mathcal{B})=\frac{\sum_{A\in\mathcal{A}}\sum_{B\in\mathcal{B\setminus A}}f_{k-1}(A,B)}{\left|\mathcal{A}\cup\mathcal{B}\right|\mathcal{\left|A\right|}}+\frac{\sum_{A\in\mathcal{A\setminus B}}\sum_{B\in\mathcal{B}}f_{k-1}(A,B)}{\left|\mathcal{A}\cup\mathcal{B}\right|\mathcal{\left|B\right|}},\label{eq:f-hierarchy}\end{equation}
which generates a metric space $\bigl(\mathcal{S}_{k}(X),f_{k}\bigr)$
based on $\bigl(\mathcal{S}_{k-1}(X),f_{k-1}\bigr)$. This metric
will be useful for constructing hierarchical hyperspaces.

\subsection{Duality}

There is a kind of duality between sets and elements with respect
to their distance functions. For example, we can define the functions
$D$ and $d$ symmetrically as follows: \begin{align}
D(A,B) & =\left|\{a\mid a\in A\}\triangle\{b\mid b\in B\}\right|=\left|A\triangle B\right|,\label{eq:D-symdif}\\
d(a,b) & =\left|\{A\mid a\in A\}\triangle\{B\mid b\in B\}\right|=\left|\mathcal{C}(a)\triangle\mathcal{C}(b)\right|,\label{eq:d-symdif}\end{align}
where $\mathcal{C}(a)=\{A\mid a\in A\}$. The set-to-set distance
$D$ in \eqref{eq:D-symdif} is a metric due to the axiom of extensionality,
and the element-to-element distance $d$ in \eqref{eq:d-symdif} is
a pseudo-metric. 

According to Theorem \ref{thm:f-is-m}, $D$ can be defined by $f$
in \eqref{eq:dist-f}, instead of \eqref{eq:D-symdif}, so that we
have \begin{alignat*}{1}
D(A,B) & =f(A,B),\\
d(a,b) & =\left|\mathcal{C}(a)\triangle\mathcal{C}(b)\right|.\end{alignat*}
In this case, $D$ is a pseudo-metric, depending on $d$, and there
exist a condition that satisfy $d(a,b)=f\left(\bigcap\mathcal{C}(a),\bigcap\mathcal{C}(b)\right)$.
Furthermore, in the situation of Section \ref{sub:Hier-m-sp}, let
$(X,d)$ be a metric space, let $\mathcal{S}_{1}(X)$ be the collection
of all non-empty finite subsets of $X$, and let $\mathcal{C}(a)=\{A\in\mathcal{S}_{1}(X)\mid a\in A\}$
be an element of $\mathcal{S}_{2}(X)$. If $d$ is the discrete metric,
then we have \begin{alignat*}{1}
D(A,B) & =f_{1}(A,B),\\
f_{2}\bigl(\mathcal{C}(a),\mathcal{C}(b)\bigr) & =\kappa d(a,b),\end{alignat*}
where $\kappa$ is a certain positive real number such that $\kappa<1$.
If there exists $d$ such that $d(a,b)=f_{2}\bigl(\mathcal{C}(a),\mathcal{C}(b)\bigr)$,
then the isometric copy of $(X,d)$ is contained in $\bigl(\mathcal{S}_{2}(X),f_{2}\bigr)$
and $d$ can be regarded as the function of $D$. In general, there
possibly exist $D$ and $d$ that are formally expressed by \begin{align}
D(A,B) & =F\bigl(\{d(a,b)\mid a\in A,b\in B\}\bigr),\label{eq:F-of-d}\\
d(a,b) & =G\bigl(\{D(A,B)\mid a\in A,b\in B\}\bigr),\label{eq:G-of-D}\end{align}
where $F$ and $G$ may be such a generalized function given in \eqref{eq:u-ij-pq-w}.
It is interesting to consider whether $F$ and $G$ really exist and
what features they have. In numerical analysis, $D$ and $d$ that
are consistent with each other will be obtained by the iterative computation
of \eqref{eq:F-of-d} for all $A,B\in\mathcal{S}_{1}(X)$ and \eqref{eq:G-of-D}
for all $a,b\in X$, starting with an initial metric space $(X,d)$,
if each converges to a non-trivial function.

\subsection{Generalized metrics}

The group average distance $g(A,B)$ in \eqref{eq:dist-g} can be
regarded as a generalized metric that satisfies conditions M1 (non-negativity),
M4 (symmetry) and M5 (triangle inequality) in Definition \ref{def:metric}.
In conventional topology, there has been no such generalization by
dropping both conditions M2 and M3, which are usually combined together
into the single axiom $d(a,b)=0\Leftrightarrow a=b$ (identity, reflexivity,
or coincidence). Although M3 can be dropped for the pseudo-metric
so as to allow $d(a,b)=0$ for $a\neq b$, the self-distance $d(a,a)=0$
(M2) seems to be indispensable in point-set topology where the element
is a point having no size. An exception is a \emph{partial metric}
\cite{Bukatin} that is defined to satisfy M1, M3, M4 and, instead
of M5, the following partial metric triangularity,\begin{equation}
d(a,b)+d(b,c)\geq d(a,c)+d(b,b).\label{eq:part-tri}\end{equation}

In computer science or information science, the element of data sets
is not merely a simple point. It may have rich contents inside. Some
elements may have internal structures which cause non-zero self-distance,
and some elements may have different properties each other, even though
they are indiscernible from a metric point of view. The concept of
distance can be used for measuring not only the difference between
objects but also the cost of moving or the energy of transition between
states. This is the reason why the generalization toward non-zero
self-distance is worth considering. If the triangle inequality holds,
it provides an upper and lower bound for them. The group average distance
$g(A,B)$ can be such a typical one, and that it is simpler and more
natural than the metric $f(A,B)$ in \eqref{eq:dist-f}. 

Incidentally, the function $g(A,B)$ is not a partial metric because
it does not satisfy \eqref{eq:part-tri}. On the other hand, $f(A,B)$
gives an approach to an instance of partial metrics from a special
case of \eqref{eq:v-000-0pq-d} in Example \ref{exa:v000rpq}. By
taking the limit as $\lambda\rightarrow\infty$ for $p<0$ and multiplying
a positive constant, for non-empty finite sets $A$ and $B$, we have
the following metric, \[
D_{\nu}(A,B)=\log\left|A\cup B\right|-\nu\log(\left|A\right|\left|B\right|),\]
where $\nu=1/2$. Its triangle inequality is equivalent to $\left|A\cup B\right|\left|B\cup C\right|\geq\left|A\cup C\right|\left|B\right|$.
This suggests, for $\nu\in[0,1/2)$, $D_{\nu}$ is a partial metric
on a collection of non-empty finite sets.

\subsection{Extension to infinite sets}

If $\mathcal{S}(X)$ is the collection of all non-null measurable
subsets of $(X,d)$, and $d$ is Lebesgue integrable on each element
of $\mathcal{S}(X)$, then the group average distance\emph{ }$g(A,B)$,
for $A,B\in\mathcal{S}(X)$, can be defined by \begin{equation}
g(A,B)=\frac{1}{\mu(A)\mu(B)}\int_{A}\left(\int_{B}d(x,y)\mathrm{d}\mu(y)\right)\mathrm{d}\mu(x)\label{eq:g-myu}\end{equation}
where $x\in A$, $y\in B$, and $\mu$ is a measure on $X$, and then
the distance function \eqref{eq:f-by-g} can be extended to \begin{equation}
f(A,B)=\frac{\mu(B\setminus A)}{\mu(A\cup B)}\, g(A,B\setminus A)+\frac{\mu(A\setminus B)}{\mu(A\cup B)}\, g(A\setminus B,B).\label{eq:f-myu}\end{equation}
If $d$ is the discrete metric, then \eqref{eq:f-myu} is equal to
the \emph{Steinhaus distance} \cite{Deza}.
\begin{example}
Let $(\mathbb{R},d)$ be a metric space and let $d(x,y)=\left|x-y\right|$.
For two intervals $A$ and $B$, the distance function \eqref{eq:f-myu}
can be expressed as \begin{alignat*}{1}
f(A,B) & =\frac{\left|\sup(A)-\sup(B)\right|+\left|\inf(A)-\inf(B)\right|}{2}\\
 & \quad-\frac{\left|\sup(A)-\sup(B)\right|\left|\inf(A)-\inf(B)\right|}{\sup(A\cup B)-\inf(A\cup B)}[(A\subset B)\vee(A\supset B)]\end{alignat*}
 If $A\nsubseteq B$ and $A\nsupseteq B$ (i.e., $[(A\subset B)\vee(A\supset B)]=0)$,
then $f(A,B)$ is equal to the distance between the centers of $A$
and $B$. This is consistent with an intuitive notion of the distance
between balls in this $(\mathbb{R},d)$.
\end{example}
If $\mathcal{S}(X)$ is the collection of all non-empty, countably
infinite subsets (measure-zero sets) of $X$, then $g(A,B)$ and $f(A,B)$
should be defined by taking limits in \eqref{eq:dist-g} and \eqref{eq:f-by-g},
provided that both have definite values. In order to determine the
average distance, we have to define a proper condition, which should
be said to be {}``averageable''. The average distance will strongly
depend on accumulation points in $A$ and $B$, and it will require
additional assumptions on the difference of the strength between the
accumulation points. This requirement is closely related to a {}``relative
measure'' that is needed to obtain the ratio of the cardinality of
an infinite set to the cardinality of its superset in \eqref{eq:f-by-g}.
In conventional measure theory, however, any set of cardinality $\aleph_{0}$
is a null set having measure zero so that both counting measure and
Lebesgue measure are useless for computing the ratio. It is necessary
to use another measure. A feasible solution is discussed in the following
section.

\subsection{Estimation by sampling}

In application to computational data analysis, statistical estimation
by sampling is very useful for obtaining the approximate value of
$f(A,B)$ when the size of the sets is very large. According to the
law of large numbers, if enough sample elements are selected randomly,
an average generated by those samples should approximate the average
of the total population. The procedure is as follows:
\begin{enumerate}
\item Choose a superset $P$ of $A\cup B$ as a population such that $P\supseteq A\cup B$.
\item Select a finite subset $S$ of $P$ as a sample obtained by random
sampling.
\item Let $S_{A}=S\cap A$ and $S_{B}=S\cap B$. Then, compute $f(S_{A},S_{B})$
for approximation of $f(A,B)$.
\end{enumerate}
The sampling process and its randomness are crucial for efficiently
estimating a good approximation. Some useful hints could be found
in various sampling techniques developed for Monte Carlo methods \cite{Rubinstein}.
In most cases, sampling error is expected to decrease as the sample
size increases, except for situations where the distribution of $d$
has no mean (e.g., Cauchy distribution).

The notion of sampling suggests an intuitive measure to define a relative
measure on a $\sigma$-algebra over a set $X$, which could be called
{}``sample counting measure''. Suppose $A$ and $B$ are subsets
of $X$. Let $\rho(A:B)$ be the ratio of the cardinality of $A$
to the cardinality of $B$, let $P$ be a superset of $A\cup B$,
and let $S_{n}$ be a non-empty finite subset of $P$ such that $S_{n}=\bigcup_{i=1}^{n}Y_{i}$
where $Y_{i}$ is the $i$-th non-empty sample randomly selected from
$P$. Then, the ratio $\rho(A:B)$ can be determined by taking a limit
of $n$ as it approaches to $\infty$ as follows: \[
\rho(A:B)=\underset{n\rightarrow\infty}{\lim}\frac{\left|S_{n}\cap A\right|}{\left|S_{n}\cap B\right|},\]
if there exist such a limit and a random choice function that performs
random sampling. Otherwise, instead of random sampling, systematic
sampling could be available if the elements of $X$ are supposed to
be distributed with uniform density in its measurable metric space.
For example, suppose there exists a finite partition of $P$ where
every part has an almost equal diameter. It seems better for $S_{n}$
to have exactly one element with each of the parts.

\subsection{Metrics for fuzzy sets and probability distributions}

A fuzzy set can be represented by a collection of crisp sets so that
the distance between fuzzy sets can be defined by the distance between
the collections of such crisp sets. Let $A$ be a fuzzy set: $A=\{\bigl(x,m_{A}(x)\bigr)\mid x\in X\}$,
where $m_{A}(x)$ is a membership function, and let $A_{\alpha}$
be a crisp set called an $\alpha-lebel$ set \cite{Zimmermann} such
that $A_{\alpha}=\{x\in X\mid m_{A}(x)\geq\alpha\}$. Then, $A$ can
be represented by the following set of ordered pairs: $\mathcal{C}(A)=\{(A_{\alpha},\alpha)\mid\alpha\in(0,1]\}.$
The distance between two fuzzy sets $A$ and $B$ can be defined by
$f_{2}\bigl(\mathcal{C}(A),\mathcal{C}(B)\bigr)$, where there may
be various ways to treat $\alpha$. This notion is also applicable
to the distance between probability distributions, where probability
density functions are used instead of the membership function.

\section{Concluding Remarks}

We have found that, for a metric space $(X,d)$, there exists a distance
function between non-empty finite subsets of $X$ that is a metric
based on the average distance of $d$. The distance function \eqref{eq:dist-f}
in Theorem \ref{thm:f-is-m} is the most typical one, which includes
the Jaccard distance as a special case where $d$ is the discrete
metric. Its extensions based on the power mean will be useful to develop
generalized forms that also include the Hausdorff metric and the other
various distance functions. Furthermore, the extensions to infinite
subsets of $X$ will provide metrics for measuring dissimilarity of
fuzzy sets and probability distributions.

\specialsection*{Appendix A. Triangle Inequality in Theorem \ref{thm:f-is-m}}

The triangle inequality for\[
f(A,B)=(\left|A\cup B\right|\left|A\right|)^{-1}s(A,B\setminus A)+(\left|A\cup B\right|\left|B\right|)^{-1}s(A\setminus B,B)\]
can be proved by showing the following inequality: \[
\left|A\right|\left|B\right|\left|C\right|\left|A\cup B\right|\left|B\cup C\right|\left|A\cup C\right|\bigl(f(A,B)+f(B,C)-f(A,C)\bigr)\geq0.\]
Let $A\cup B\cup C$ be decomposed into the following seven disjoint
sets: \begin{alignat*}{3}
\alpha & =A\setminus(B\cup C), & \quad\beta & =B\setminus(A\cup C), & \quad\gamma & =C\setminus(A\cup B),\\
\delta & =A\cap B\setminus C, & \varepsilon & =B\cap C\setminus A, & \zeta & =C\cap A\setminus B,\quad\eta=A\cap B\cap C,\end{alignat*}
and let $\theta=B\setminus\beta=B\cap(A\cup C)$, so that we have
\begin{alignat*}{2}
A & =\alpha\cup\delta\cup\zeta\cup\eta, & \quad\left|A\right| & =\left|\alpha\right|+\left|\delta\right|+\left|\zeta\right|+\left|\eta\right|,\\
B & =\beta\cup\delta\cup\varepsilon\cup\eta, & \left|B\right| & =\left|\beta\right|+\left|\delta\right|+\left|\varepsilon\right|+\left|\eta\right|,\\
C & =\gamma\cup\varepsilon\cup\zeta\cup\eta, & \left|C\right| & =\left|\gamma\right|+\left|\varepsilon\right|+\left|\zeta\right|+\left|\eta\right|,\\
\theta & =\delta\cup\varepsilon\cup\eta, & \left|\theta\right| & =\left|\delta\right|+\left|\varepsilon\right|+\left|\eta\right|.\end{alignat*}
Taking account of \eqref{eq:s-decomp} and \eqref{eq:t-tri}, we have
\begin{alignat*}{1}
\lefteqn{\left|A\right|\left|B\right|\left|C\right|\left|A\cup B\right|\left|B\cup C\right|\left|A\cup C\right|\bigl(f(A,B)+f(B,C)-f(A,C)\bigr)}\\
 & =\left|B\right|\left|C\right|\left|B\cup C\right|\left|A\cup C\right|s(A,B\setminus A)+\left|A\right|\left|C\right|\left|B\cup C\right|\left|A\cup C\right|s(A\setminus B,B)\\
 & \quad+\left|A\right|\left|C\right|\left|A\cup B\right|\left|A\cup C\right|s(B,C\setminus B)+\left|A\right|\left|B\right|\left|A\cup B\right|\left|A\cup C\right|s(B\setminus C,C)\\
 & \quad-\left|B\right|\left|C\right|\left|A\cup B\right|\left|B\cup C\right|s(A,C\setminus A)-\left|A\right|\left|B\right|\left|A\cup B\right|\left|B\cup C\right|s(A\setminus C,C)\\[2mm]
 & =\left|B\right|\left|C\right|\bigl(\left|\gamma\right|\left|\delta\cup C\right|s(A,B\setminus A)+(\left|\gamma\right|\left|\alpha\right|+\left|B\cup\zeta\right|\left|C\setminus A\right|)s(A,\beta)\\
 & \hspace{7mm}+\left|B\cup\zeta\right|\left|A\right|\bigl(s(A\setminus C,\beta)+s(A\cap C,\beta)\bigr)+(\left|\beta\right|\left|\gamma\right|+\left|B\cup C\right|\left|A\cup\varepsilon\right|)s(A,\varepsilon)\bigr)\\
 & \quad+\left|A\right|\left|C\right|\bigl((\left|\gamma\right|\left|A\cup C\right|+\left|\zeta\right|\left|\gamma\right|+\left|\zeta\right|\left|A\cup\varepsilon\right|)s(\alpha,B)\\
 & \hspace{11mm}+\left|B\right|\left|A\cup\varepsilon\right|(s(\alpha,B\setminus C)+s(\alpha,B\cap C))+\left|B\right|\left|\gamma\right|(s(\alpha,\beta)+s(\alpha,\theta))\\
 & \hspace{43mm}+(\left|\delta\cup C\right|\left|\gamma\right|+\left|\delta\cup C\right|\left|A\cup\varepsilon\right|+\left|\beta\right|\left|A\cup C\right|)s(\zeta,B)\bigr)\\
 & \quad+\left|A\right|\left|C\right|\bigl((\left|\alpha\right|\left|A\cup C\right|+\left|\zeta\right|\left|\alpha\right|+\left|\zeta\right|\left|\delta\cup C\right|)s(B,\gamma)\\
 & \hspace{11mm}+\left|B\right|\left|\delta\cup C\right|(s(B\setminus A,\gamma)+s(A\cap B,\gamma))+\left|B\right|\left|\alpha\right|(s(\beta,\gamma)+s(\theta,\gamma))\\
 & \hspace{43mm}+(\left|A\cup\varepsilon\right|\left|\alpha\right|+\left|A\cup\varepsilon\right|\left|\delta\cup C\right|+\left|\beta\right|\left|A\cup C\right|)s(B,\zeta)\bigr)\\
 & \quad+\left|A\right|\left|B\right|\bigl(\left|\alpha\right|\left|A\cup\varepsilon\right|s(B\setminus C,C)+(\left|\alpha\right|\left|\gamma\right|+\left|\zeta\cup B\right|\left|A\setminus C\right|)s(\beta,C)\\
 & \hspace{7mm}+\left|\zeta\cup B\right|\left|C\right|(s(\beta,A\cap C)+s(\beta,C\setminus A))+(\left|\beta\right|\left|\alpha\right|+\left|A\cup B\right|\left|\delta\cup C\right|)s(\delta,C)\bigr)\\
 & \quad-\left|B\right|\left|C\right|\bigl(\left|\zeta\cup B\right|\left|\beta\right|s(A,C\setminus A)+(\left|\alpha\right|\left|\beta\right|+\left|B\setminus A\right|\left|\delta\cup C\right|)s(A,\gamma)\\
 & \hspace{8mm}+\left|A\right|\left|\delta\cup C\right|(s(\alpha\cup\zeta,\gamma)+s(A\cap B,\gamma))+(\left|\beta\right|\left|\gamma\right|+\left|A\cup\varepsilon\right|\left|B\cup C\right|)s(A,\varepsilon)\bigr)\\
 & \quad-\left|A\right|\left|B\right|\bigl(\left|\beta\right|\left|B\cup\zeta\right|s(A\setminus C,C)+(\left|\beta\right|\left|\gamma\right|+\left|A\cup\varepsilon\right|\left|B\setminus C\right|)s(\alpha,C)\\
 & \hspace{8mm}+\left|A\cup\varepsilon\right|\left|C\right|(s(\alpha,\zeta\cup\gamma)+s(\alpha,B\cap C))+(\left|\alpha\right|\left|\beta\right|+\left|A\cup B\right|\left|\delta\cup C\right|)s(\delta,C)\bigr)\\[2mm]
 & =\left|B\right|\left|C\right|\bigl(\left|\delta\cup C\right|t(A,B\setminus A,\gamma)+\left|\alpha\right|t(A,\beta,\gamma)+\left|B\cup\zeta\right|t(A,\beta,C\setminus A)\bigr)\\
 & \quad+\left|A\right|\left|B\right|\bigl(\left|A\cup\varepsilon\right|t(\alpha,B\setminus C,C)+\left|\gamma\right|t(\alpha,\beta,C)+\left|B\cup\zeta\right|t(A\setminus C,\beta,C)\big)\\
 & \quad+\left|A\right|\left|C\right|\bigl((\left|A\cup C\right|+\left|\zeta\right|)t(\alpha,B,\gamma)+\left|B\right|t(\alpha,\theta,\gamma)\bigr)\\
 & \quad+\left|A\right|\left|C\right|\bigl(\left|A\cup\varepsilon\right|t(\alpha,B,\zeta)+\left|C\cup\delta\right|t(\zeta,B,\gamma)\bigr)\\
 & \quad+2\left|A\right|\left|C\right|(\left|\delta\cup C\right|\left|A\cup\varepsilon\right|+\left|\beta\right|\left|A\cup C\right|)s(B,\zeta)\\
 & \quad+2\left|A\right|\left|B\right|\left|C\right|\left|B\cup\zeta\right|s(\beta,A\cap C)\geq0.\end{alignat*}
 where the equality holds if all terms of $s$ and $t$ are zero.

\specialsection*{Appendix B. Triangle Inequalities of Example \ref{exa:v000rpq}}

The triangle inequality for \eqref{eq:u-00-pq-d} can be proved as
follows: Let $x=e^{p\lambda}$ and let \[
\tau(x)=\left(\frac{x\left|A\triangle B\right|+\left|A\cap B\right|}{\left|A\cup B\right|}\right)\left(\frac{x\left|B\triangle C\right|+\left|B\cap C\right|}{\left|B\cup C\right|}\right)-\left(\frac{x\left|A\triangle C\right|+\left|A\cap C\right|}{\left|A\cup C\right|}\right).\]
Then the triangle inequality for $p>0$ is equivalent to $\tau(x)\geq0$
for $x>1$. The first derivative of $\tau(x)$ with respect to $x$
is \[
\tau^{\prime}(x)=2(x-1)j(A,B)j(B,C)+j(A,B)+j(B,C)-j(A,C),\]
where $j$ is the Jaccard distance \eqref{eq:dist-j}. Since $\tau(1)=0$
and $\tau^{\prime}(x)\geq0$ for $x\geq1$, we have $\tau(e^{p\lambda})\geq0$
for $p>0$.

The triangle inequality for \eqref{eq:v-000-0pq-d} can be proved
as follows: Let $y=1-e^{p\lambda}$ and let \begin{alignat*}{1}
\tau(y) & =\left(1-\frac{\left|A\setminus C\right|}{\left|A\cup C\right|}y\right)\left(1-\frac{\left|C\setminus A\right|}{\left|A\cup C\right|}y\right)\\
 & \quad-\left(1-\frac{\left|A\setminus B\right|}{\left|A\cup B\right|}y\right)\left(1-\frac{\left|B\setminus A\right|}{\left|A\cup B\right|}y\right)\left(1-\frac{\left|B\setminus C\right|}{\left|B\cup C\right|}y\right)\left(1-\frac{\left|C\setminus B\right|}{\left|B\cup C\right|}y\right)\\
 & =y\phi(y),\end{alignat*}
where $\phi(y)$ is the cubic function of $y$ with a negative leading
coefficient. Then the triangle inequality for $p<0$ is equivalent
to $\tau(y)\geq0$ for $y\in[0,1)$. The function $\phi(y)$ satisfies
the following inequalities: $\phi(0)\geq0$, $\phi(1)=\tau(1)\geq0$,
$\phi^{\prime}(1)\leq0$, and $\phi^{\prime\prime}(1)\geq0$. These
inequalities can be proved by decomposition of $A$, $B$, and $C$
into $\alpha$, $\beta$, $\gamma$, $\delta$, $\varepsilon$, $\zeta$
and $\eta$ defined in Appendix A. Then, we have $\phi(y)\geq0$ for
$y\in[0,1)$, therefore, $\tau(1-e^{p\lambda})\geq0$ for $p<0$.

\end{document}